\numberwithin{equation}{section}
\def\pmod #1{\ ({\rm{mod}}\ #1)}
\newtheorem{theorem*}{Theorem}
\newtheorem{lemma*}{Lemma}
\theoremstyle{plain}
\newtheorem{theorem}{Theorem}
\newtheorem{lemma}{Lemma}
\newtheorem{conjecture}{Conjecture}
\theoremstyle{definition}
\begin{document}

\title
[{On a conjecture on shifted primes with large prime factors}] {On a conjecture on shifted primes with large prime factors}

\author
[Yuchen Ding] {Yuchen Ding}

\address{(Yuchen Ding) School of Mathematical Science,  Yangzhou University, Yangzhou 225002, People's Republic of China}
\email{ycding@yzu.edu.cn}
\thanks{*Corresponding author}

\keywords{Shifted prime; The Bombieri--Vinogradov theorem.}
\subjclass[2010]{Primary 11N05.}

\begin{abstract} Let $\mathcal{P}$ be the set of all primes and $\pi(x)$ be the number of primes up to $x$. For any $n\ge 2$, let $P^+(n)$ be the largest prime factor of $n$. For $0<c<1$, let $$T_c(x)=\#\{p\le x:p\in \mathcal{P},P^+(p-1)\ge p^c\}.$$
In this note, we prove that there exists some $c<1$ such that $$\limsup_{x\rightarrow\infty}\frac{T_c(x)}{\pi(x)}<\frac12,$$
which disproves a conjecture of Chen and Chen.
\end{abstract}
\maketitle

\baselineskip 18pt

\section{Introduction}
The investigations of the largest prime factors of shifted primes start from a cute article of Goldfeld \cite{Gold}, where he proved that
$$\#\{p\le x:p\in \mathcal{P},P^+(p-1)>p^{1/2}\}\ge \frac{1}{2}\frac{x}{\log x}+O\left(\frac{x\log\log x}{(\log x)^2}\right),$$
where $\mathcal{P}$ is the set of all primes and $P^+(n)$ denotes the largest prime factor of integer $n$. Conventionally, we set $P^+(1)=1$. For $0<c<1$, let $$T_c(x)=\#\{p\le x:p\in \mathcal{P},P^+(p-1)\ge p^c\}.$$
Goldfeld further remarked that one can show
\begin{align}\label{eq1}
\liminf_{x\rightarrow\infty}\frac{T_c(x)}{\pi(x)}>0
\end{align}
provided that $c<7/12$, where $\pi(x)$ is the number of primes up to $x$. Fouvry \cite{Fou} improved this by showing that there is some $c_0\in(2/3,1)$ such that inequality (\ref{eq1}) holds.

In the same direction, Luca et al. \cite{LMP} considered the lower bound for $T_c(x)$ for small values of $c$. They proved that for $1/4\le c\le 1/2$
$$T_c(x)\ge (1-c)\frac{x}{\log x}+E(x),$$
where $$
E(x)\ll
\begin{cases}
x\log\log x/(\log x)^2, & \text{for~} 1/4<c\le 1/2,\\
x/(\log x)^{5/3}, & \text{for~} c=1/4.
\end{cases}
$$ As pointed out by Chen and Chen \cite{CC}, the method of Luca et al. (essentially due to Goldfeld) cannot be applied to $c\in (0,1/4)$. By some refinements of the argument employed by Luca et al., Chen and Chen could extend $c$ to the interval $(0,1/2)$ with slightly better error terms of the order of magnitude $O(x/(\log x)^2)$. Moreover, Chen and Chen proved the following interesting result. For any integer $k\ge 2$, there exists at most one $c\in [\frac1{k+1},\frac1k)$ such that $$T_c(x)=(1-c)\frac{x}{\log x}+o\left(\frac{x}{\log x}\right).$$ Based on this result, Chen and Chen made the following conjecture.
\begin{conjecture}\label{conje1} For any integer $k\ge 1$ and any $c\in [\frac1{k+1},\frac1k)$, we have
$$T_c(x)\ge \left(1-\frac{1}{k+1}\right)\frac{x}{\log x}+o\left(\frac{x}{\log x}\right).$$
\end{conjecture}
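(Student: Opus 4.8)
The natural plan is to prove the cleaner lower bound coming from the count
\[
N(x):=\#\{p\le x:\ \exists\,q\in\mathcal{P},\ q\mid p-1,\ q\ge x^{c}\}.
\]
Since $p\le x$ forces $x^{c}\ge p^{c}$, any $p$ counted by $N(x)$ has $P^{+}(p-1)\ge x^{c}\ge p^{c}$, so $N(x)\le T_c(x)$ and it suffices to bound $N(x)$ from below; localising to $p\in(x^{1-\delta},x]$, which discards only $\pi(x^{1-\delta})=o(\pi(x))$ primes, shows moreover that $N(x)$ and $T_c(x)$ share the same density, so nothing essential is lost. The decisive structural input is that, for $c\in[\tfrac1{k+1},\tfrac1k)$, any set of distinct primes each $\ge x^{c}$ and dividing $p-1<x$ has at most $k$ elements, since $k+1$ of them would force $p-1\ge x^{(k+1)c}\ge x$. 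Hence, writing $r(p)=\#\{q\in\mathcal{P}:q\mid p-1,\ q\ge x^{c}\}$, we have $r(p)\le k$ throughout.

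With $r(p)\le k$ the inclusion--exclusion terminates exactly:
\[
N(x)=\sum_{j=1}^{k}(-1)^{j-1}S_j,\qquad S_j=\sum_{\substack{p\le x\\ p\in\mathcal{P}}}\binom{r(p)}{j},
\]
where $S_j$ counts a prime $p\le x$ together with a $j$-element set $\{q_1,\dots,q_j\}$ of distinct primes $\ge x^{c}$ all dividing $p-1$. Grouping by the modulus $d=q_1\cdots q_j$ gives $S_j=\sum_{d}\pi(x;d,1)$ over squarefree $d$ with exactly $j$ prime factors, each $\ge x^{c}$. Replacing $\pi(x;d,1)$ by its expected value $\pi(x)/\varphi(d)$ and evaluating the resulting sum over $q_i=x^{t_i}$ by Mertens' theorem, one is led to $S_j\sim I_j(c)\,\pi(x)$, where
\[
I_j(c)=\frac{1}{j!}\int_{\substack{t_1,\dots,t_j\ge c\\ t_1+\cdots+t_j\le 1}}\frac{dt_1\cdots dt_j}{t_1\cdots t_j}.
\]
The plan thus reduces the conjecture to (i) justifying $S_j=I_j(c)\pi(x)+o(\pi(x))$ and (ii) proving the elementary inequality $F_k(c):=\sum_{j=1}^{k}(-1)^{j-1}I_j(c)\ge 1-\tfrac1{k+1}$.

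For step (i) the error $\sum_d|\pi(x;d,1)-\pi(x)/\varphi(d)|$ must be controlled for squarefree $d$ up to almost $x$: when $d\le x^{1/2-\epsilon}$ this is precisely the Bombieri--Vinogradov theorem, but the constraint $t_1+\cdots+t_j\le 1$ forces $d$ as large as $x^{1-\epsilon}$ in the higher terms $S_2,\dots,S_k$, which lies beyond Bombieri--Vinogradov and into Elliott--Halberstam territory. One could hope to recover the needed range for the special, well-factorable moduli $d=q_1\cdots q_j$ via dispersion or the Bombieri--Friedlander--Iwaniec circle of ideas; I expect this to be the principal analytic obstacle, and with only Bombieri--Vinogradov in hand one obtains merely the truncated constant $\sum_j(-1)^{j-1}I_j^{\le 1/2}(c)$, strictly smaller than $F_k(c)$.

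The sharper and, to my mind, decisive difficulty is step (ii). The quantity $F_k(c)$ is the conjectural density $\lim T_c(x)/\pi(x)$, and the inequality $F_k(c)\ge 1-\tfrac1{k+1}$ must be checked uniformly on $[\tfrac1{k+1},\tfrac1k)$. Near the left endpoint this looks comfortable, but $F_k$ decreases towards the right endpoint and the margin shrinks; already for $k=1$ one computes $F_1(c)=\int_c^1 dt/t=-\log c$, which tends to $0$ as $c\to1$ and in particular drops below $\tfrac12$ once $c>e^{-1/2}$. This is the step on which the whole scheme stands or falls, and any genuine failure of the conjecture would have to surface precisely here, in the behaviour of $F_k(c)$ as $c$ approaches $1/k$.
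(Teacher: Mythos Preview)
The paper does not prove this statement; it \emph{disproves} it for $k=1$ (Theorem~\ref{thm1}). So your attempt is aimed in the wrong direction: you are trying to establish the lower bound, whereas the goal is to produce some $c\in[\tfrac12,1)$ with $\limsup T_c(x)/\pi(x)<\tfrac12$. To your credit, your final paragraph lands on exactly the right heuristic: for $k=1$ the predicted density is $F_1(c)=-\log c$, which falls below $\tfrac12$ once $c>e^{-1/2}$, so the conjecture should fail there. But this remains conditional on Elliott--Halberstam (your step~(i)), and you stop at ``the scheme stands or falls here'' rather than extracting an unconditional statement.

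The paper's route is quite different and avoids Elliott--Halberstam entirely. Instead of inclusion--exclusion for the density, it bounds $T_c(x)$ from \emph{above} by inserting the weight $\log q/(c\log x)\ge 1$:
\[
T_c(x)\le \frac{1}{c\log x}\sum_{x^c\le q\le x}\pi(x;q,1)\log q+O\Bigl(\frac{x\log\log x}{(\log x)^2}\Bigr),
\]
and then controls the weighted sum $L(x;x^c,x)=\sum_{x^c<m\le x}\Lambda(m)\pi(x;m,1)$. Goldfeld's identities give $L(x;1,x)\sim x$ and $L(x;1,x^{1/2})\sim x/2$, so $L(x;x^c,x)\le x/2-L(x;x^{1/2},x^{33/64})+o(x)$. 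The crucial unconditional input---replacing your appeal to Elliott--Halberstam---is an exceptional-set form of Bombieri--Vinogradov valid just beyond level $\tfrac12$ (Baker--Harman, Mikawa; Lemma~\ref{lem4} in the paper): for almost all moduli $m\in(Q,2Q]$ with $Q=x^\theta$, $\theta<17/32$, one has $\pi(x;m,1)\asymp\pi(x)/\varphi(m)$. This forces $L(x;x^{1/2},x^{33/64})\gg x$, hence $L(x;x^c,x)\le(\tfrac12-\delta)x$ for some fixed $\delta>0$, and choosing $c>1-\delta$ yields $T_c(x)\le\tfrac{1}{c}(\tfrac12-\delta)\pi(x)<\tfrac12\pi(x)$. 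The exceptional-set result at levels $\tfrac12<\theta<\tfrac{17}{32}$ is precisely the device that makes the disproof unconditional; your framework, by contrast, needs the full range $\theta<1$ to even compute $S_1$ when $c$ is close to~$1$.
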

Let $\rho(u)$ be the Dickman function, defined as the unique continuous solution of the equation
differential--difference
\begin{align*}
\begin{cases}
\rho(u)=1, & 0\le u\le 1,\\
u\rho'(u)=-\rho(u-1), & u>1.
\end{cases}
\end{align*}
Let $\theta_1\thickapprox 0.3517$ be the unique solution of equation
$$\theta-4\int_{1/\theta-1}^{1/\theta}\frac{\rho(t)}{t}dt=0.$$
Feng and Wu \cite{FW} proved that
$$T_{c}(x)\ge\left(1-4\int_{1/\theta-1}^{1/\theta}\frac{\rho(t)}{t}dt+o(1)\right)\pi(x)$$
provided $0<c<\theta_1$, which confirmed Conjecture \ref{conje1} for $k\ge 3$ by numerical values involving Dickman's function. Later, the lower bound for $T_c(x)$ was further improved to
$$T_{c}(x)\ge\left(1-4\rho(1/\theta)+o(1)\right)\pi(x)$$
by Liu, Wu and Xi \cite{LWX} provided that $0<\theta<\theta_2\thickapprox0.3734$, where $\theta_2$ is the unique solution to equation $\theta-4\rho(1/\theta)=0$.

In the present note, we show that Conjecture \ref{conje1} is actually incorrect for $k=1$. Precisely,
\begin{theorem}\label{thm1}
There is an absolute constant $c<1$ such that $$\limsup_{x\rightarrow\infty}\frac{T_c(x)}{\pi(x)}<1/2.$$
\end{theorem}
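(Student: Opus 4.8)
The plan is to derive a clean unconditional upper bound for $T_c(x)$ that, relative to $\pi(x)$, tends to $0$ as $c\to 1^-$, so that the conjectured lower bound $\tfrac12$ must fail once $c$ is close enough to $1$. Fix $c\in(1/2,1)$. If $p\le x$ is counted by $T_c(x)$ and $p>x^{1/2}$ (the remaining $p$ contribute only $O(x^{1/2})$), then $q:=P^+(p-1)\ge p^c>p^{1/2}$ is the unique prime factor of $p-1$ exceeding $(p-1)^{1/2}$, so $q$ divides $p-1$ exactly once and we may write $p-1=mq$ with $m=(p-1)/q\le p^{1-c}\le x^{1-c}$; since $p$ and $q$ are odd, $m$ is even. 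Dropping the constraints $q\ge p^c$ and $q>P^+(m)$, which only enlarges the count, I would bound
\begin{align*}
T_c(x)\le O(x^{1/2})+\sum_{\substack{m\le x^{1-c}\\ 2\mid m}}\tilde N_m(x),\qquad \tilde N_m(x):=\#\{q\le x/m:\ q,\ mq+1\ \text{both prime}\}.
\end{align*}

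The heart of the matter is an upper bound for the inner count $\tilde N_m(x)$, uniform in $m\le x^{1-c}$. This is precisely the problem of counting $n\le x/m$ for which the two linear forms $n$ and $mn+1$ are simultaneously prime, so an application of the Selberg (or Brun) upper-bound sieve yields
\begin{align*}
\tilde N_m(x)\le (C+o(1))\,\mathfrak{S}(m)\,\frac{x/m}{(\log(x/m))^2},\qquad \mathfrak{S}(m)=2C_2\prod_{\substack{\ell\mid m\\ \ell>2}}\frac{\ell-1}{\ell-2},
\end{align*}
with $C$ an absolute constant and $C_2$ the twin-prime constant. The crucial feature is the saving of \emph{two} factors of $\log$, which forces the use of the two-form sieve rather than a single Brun--Titchmarsh estimate (the latter would only save one logarithm and produce a bound of order $x$).

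It then remains to sum over $m$. For $m\le x^{1-c}$ one has $\log(x/m)\ge c\log x$, so pulling this factor out and invoking the standard mean-value estimate $\sum_{m\le y}\mathfrak{S}(m)/m\le A\log y+O(1)$ (a multiplicative-function sum of logarithmic size) with $y=x^{1-c}$ gives
\begin{align*}
T_c(x)\le \frac{Cx}{c^2(\log x)^2}\sum_{m\le x^{1-c}}\frac{\mathfrak{S}(m)}{m}+O(x^{1/2})\le \frac{CA(1-c)}{c^2}\cdot\frac{x}{\log x}+O\!\left(\frac{x}{(\log x)^2}\right).
\end{align*}
Hence $\limsup_{x\to\infty}T_c(x)/\pi(x)\le CA(1-c)/c^2$, and since the right-hand side tends to $0$ as $c\to 1^-$, any $c$ sufficiently close to $1$ makes it $<1/2$, which proves the theorem. (The heuristic truth is $1-\rho(1/c)=-\log c\sim 1-c$, so the sieve loses only the harmless absolute factor $CA$.)

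The main obstacle I anticipate is certifying the uniformity of the sieve bound in the modulus $m$ and the exact shape of the singular series $\mathfrak{S}(m)$; however, because $x^{1-c}$ is only a small power of $x$ when $c$ is near $1$, the modulus $m$ is small and $x/m\ge x^c$, placing the sieve comfortably within its standard range, so the level-of-distribution bookkeeping is routine. The remaining care lies in the parity restriction (odd $m$ contribute nothing, since then $mq+1$ is even) and in verifying $\sum_{m\le y}\mathfrak{S}(m)/m\ll\log y$. Notably, since only a one-sided (upper) estimate is required, no asymptotic for primes in arithmetic progressions is needed, so the Bombieri--Vinogradov theorem can be avoided entirely for this conclusion.
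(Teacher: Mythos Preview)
Your proof is correct and follows a genuinely different route from the paper. The paper stays within Goldfeld's framework: it bounds $T_c(x)$ via the weighted sum $L(x;x^c,x)=\sum_{x^c<m\le x}\Lambda(m)\pi(x;m,1)$, writes $L(x;x^c,x)=x/2-L(x;x^{1/2},x^c)+O(\cdot)$, and then needs a \emph{lower} bound for $L(x;x^{1/2},x^{33/64})$. Since this involves $\pi(x;m,1)$ for moduli $m>x^{1/2}$, the paper invokes Lemma~\ref{lem4} (the Baker--Harman/Mikawa ``almost-all'' result for level $\theta\in(1/2,17/32)$) to obtain $L(x;x^c,x)\le (1/2-\delta)x$, and finally chooses $c$ just below $1$ so that $(1/2-\delta)/c<1/2$. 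The exceptional level-$>1/2$ input is essential to their argument, as the author stresses in the final remarks.

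Your argument sidesteps all of this. By writing $p-1=mq$ with $m\le x^{1-c}$ and applying the Selberg upper-bound sieve to the pair $(q,mq+1)$, you get $\tilde N_m(x)\ll \mathfrak{S}(m)\,x/(m(\log x)^2)$ uniformly in $m$ (the uniformity is indeed routine here since $m\le x^{1-c}=o(x/m)$), and summing the singular series gives $T_c(x)/\pi(x)\ll (1-c)/c^{2}\to 0$ as $c\to 1^-$. This is both more elementary---no Bombieri--Vinogradov, no level beyond $1/2$---and quantitatively stronger: the paper's bound stabilises at $1/2-\delta$ as $c\to 1$, whereas yours tends to $0$. What the paper's approach buys is a connection to the explicit Goldfeld identity and to the machinery of Lemma~\ref{lem4}; what your approach buys is a shorter, self-contained proof using only the classical two-dimensional sieve and a standard logarithmic average of the singular series.
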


\section{Proofs}
Before presenting the proof of Theorem \ref{thm1}, we first fix some notations and then state a few lemmas to be used later. Let $\Lambda(n)$ be the von Mangoldt function. As usual, $\varphi(n)$ denotes the Euler totient function. Let $\pi(x;b,a)$ be the number of primes $p\equiv a\pmod{b}$ up to $x$. Following Goldfeld and Luca et al., we define
$$L(x;u,v)=\sum_{u<m\le v}\Lambda(m)\pi(x;m,1).$$

The first two lemmas are included in the proof of Goldfeld (see also \cite[Lemma 2.1]{CC}).
\begin{lemma}\label{lem1}\cite[bottom of page 23]{Gold}
For sufficiently large $x$, we have
$$L(x;1,x)=x+O\left(x/\log x\right).$$
\end{lemma}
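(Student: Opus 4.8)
The plan is to rewrite $L(x;1,x)$ as a sum over primes by interchanging the two summation variables, after which the statement reduces to the Prime Number Theorem. First I would unfold the definition. Since $\pi(x;m,1)$ counts the primes $p\le x$ with $p\equiv 1\pmod m$, equivalently with $m\mid p-1$, interchanging the order of summation gives
\begin{align*}
L(x;1,x)=\sum_{1<m\le x}\Lambda(m)\sum_{\substack{p\le x\\ m\mid p-1}}1=\sum_{p\le x}\sum_{\substack{1<m\le x\\ m\mid p-1}}\Lambda(m).
\end{align*}

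Next I would dispose of the truncation $m\le x$: for a prime $p\le x$, every divisor $m$ of $p-1$ obeys $m\le p-1<x$, so the inner constraint $m\le x$ is vacuous and the inner sum runs over all divisors $m>1$ of $p-1$. Invoking the elementary identity $\sum_{d\mid n}\Lambda(d)=\log n$ together with $\Lambda(1)=0$, the inner sum collapses to $\log(p-1)$ for each prime $p\ge 2$ (the prime $p=2$ contributing $\log 1=0$). This yields the clean formula
\begin{align*}
L(x;1,x)=\sum_{p\le x}\log(p-1).
\end{align*}

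Then I would compare this with Chebyshev's function $\theta(x)=\sum_{p\le x}\log p$. Writing $\log(p-1)=\log p+\log(1-1/p)$ and summing, the discrepancy is
\begin{align*}
\sum_{p\le x}\log\left(1-\frac1p\right)\ll\sum_{p\le x}\frac1p\ll\log\log x,
\end{align*}
so that $L(x;1,x)=\theta(x)+O(\log\log x)$. Finally, the Prime Number Theorem in the form $\theta(x)=x+O(x/\log x)$ (the classical zero-free region in fact gives far more) absorbs the $O(\log\log x)$ term and produces exactly $L(x;1,x)=x+O(x/\log x)$.

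There is essentially no obstacle: the whole argument is a single interchange of summation feeding into the Prime Number Theorem. The only point deserving a moment's attention is the verification that truncating the divisor variable at $x$ discards nothing when $p\le x$, and the sole quantitative input is the standard error term for $\theta(x)$; any version of the Prime Number Theorem with error at least as good as $O(x/\log x)$ suffices.
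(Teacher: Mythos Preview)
Your proof is correct and is essentially the classical Goldfeld argument that the paper cites rather than reproduces: interchange the order of summation, collapse the inner divisor sum via $\sum_{d\mid n}\Lambda(d)=\log n$, and apply the Prime Number Theorem for $\theta(x)$. The paper provides no separate proof of this lemma, so there is nothing further to compare.
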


\begin{lemma}\label{lem2}\cite[equations (2) and (3)]{Gold}
For sufficiently large $x$, we have
$$L(x;1,x^{1/2})=x/2+O\left(\frac{x\log\log x}{\log x}\right).$$
\end{lemma}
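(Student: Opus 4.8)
The plan is to evaluate $L(x;1,x^{1/2})$ by replacing each count $\pi(x;m,1)$ with its expected main term $\mathrm{li}(x)/\varphi(m)$ and estimating the resulting error via the Bombieri--Vinogradov theorem throughout the bulk of the range, and via the Brun--Titchmarsh inequality in a thin top range. Since $\Lambda(1)=0$ I may freely sum over $1\le m\le x^{1/2}$, and writing $\pi(x;m,1)=\mathrm{li}(x)/\varphi(m)+E(x;m)$ gives
$$L(x;1,x^{1/2})=\mathrm{li}(x)\sum_{m\le x^{1/2}}\frac{\Lambda(m)}{\varphi(m)}+\sum_{m\le x^{1/2}}\Lambda(m)E(x;m).$$

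First I would handle the main term. Comparing with the classical estimate $\sum_{m\le y}\Lambda(m)/m=\log y+O(1)$, the difference $\sum_m \Lambda(m)\bigl(1/\varphi(m)-1/m\bigr)$ equals, on a prime power $q^a$, exactly $\log q/(q^a(q-1))$, so the full series is dominated by $\sum_q \log q/(q-1)^2<\infty$; hence $\sum_{m\le y}\Lambda(m)/\varphi(m)=\log y+O(1)$. Taking $y=x^{1/2}$ yields $\tfrac12\log x+O(1)$, and with $\mathrm{li}(x)=\frac{x}{\log x}+O\bigl(x/(\log x)^2\bigr)$ the main term becomes $\frac x2+O(x/\log x)$.

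Next I would bound the error sum, splitting at $Q=x^{1/2}/(\log x)^B$. For $m\le Q$ I use $\Lambda(m)\le \log x$ together with the Bombieri--Vinogradov theorem to obtain $\sum_{m\le Q}\Lambda(m)\lvert E(x;m)\rvert\ll (\log x)\,x/(\log x)^A\ll x/(\log x)^2$, after choosing $B=B(A)$ with $A=3$. For the sliver $Q<m\le x^{1/2}$, where Bombieri--Vinogradov no longer reaches, I treat the two halves separately. The main-term half $\mathrm{li}(x)\sum_{Q<m\le x^{1/2}}\Lambda(m)/\varphi(m)$ is $\ll \frac{x}{\log x}\log\log x$ by Mertens' theorem. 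In the half $\sum_{Q<m\le x^{1/2}}\Lambda(m)\pi(x;m,1)$ the prime powers $m=q^a$ with $a\ge2$ contribute only $O(x^{3/4+o(1)})$, while for primes $q\in(Q,x^{1/2}]$ one has $\log(x/q)\ge\tfrac12\log x$, so Brun--Titchmarsh gives $\pi(x;q,1)\ll x/((q-1)\log x)$ and hence $\sum_{Q<q\le x^{1/2}}(\log q)\pi(x;q,1)\ll \frac{x}{\log x}\sum_{Q<q\le x^{1/2}}\frac{\log q}{q}\ll \frac{x\log\log x}{\log x}$, again by Mertens. Assembling the main term with all error contributions yields the claim.

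The main obstacle is precisely the top sliver $(Q,x^{1/2}]$, which lies beyond the reach of the Bombieri--Vinogradov theorem: it is the Brun--Titchmarsh bound on this range, combined with the saving $\sum_{Q<q\le x^{1/2}}(\log q)/q=B\log\log x+O(1)$, that produces the error term of size $x\log\log x/\log x$ recorded in the statement. Securing the precise constant $1/2$ rests on the evaluation $\sum_{m\le x^{1/2}}\Lambda(m)/\varphi(m)=\tfrac12\log x+O(1)$ and the arithmetic $\mathrm{li}(x)\cdot\tfrac12\log x=\tfrac x2+O(x/\log x)$.
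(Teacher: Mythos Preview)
Your argument is correct and is precisely the classical Goldfeld computation that the paper merely cites (as \cite[equations (2) and (3)]{Gold}) without reproducing: split off the main term $\mathrm{li}(x)\sum_{m\le x^{1/2}}\Lambda(m)/\varphi(m)=\tfrac{x}{2}+O(x/\log x)$, control the error over $m\le x^{1/2}/(\log x)^B$ by Bombieri--Vinogradov, and in the remaining sliver use Brun--Titchmarsh together with $\sum_{Q<q\le x^{1/2}}(\log q)/q\ll\log\log x$ to produce the $x\log\log x/\log x$ term. There is nothing to add.
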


The next lemma is another conjecture of Chen and Chen confirmed by Wu later.
\begin{lemma}\label{lem3}\cite[Theorem 2]{Wu}
For $0<c<1$, let $$T'_c(x)=\#\{p\le x:p\in\mathcal{P}, P^+(p-1)\ge x^c\}.$$
Then for sufficiently large $x$ we have
$$T_c(x)=T'_c(x)+O\left(\frac{x\log\log x}{(\log x)^2}\right).$$
\end{lemma}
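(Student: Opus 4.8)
The plan is to exploit the evident inclusion between the two counting functions. Since $p\le x$ and $0<c<1$, we have $p^c\le x^c$, so every prime $p$ counted by $T'_c(x)$ is also counted by $T_c(x)$; hence $T'_c(x)\le T_c(x)$ and the difference $T_c(x)-T'_c(x)$ is exactly the number of primes $p\le x$ with
$$p^c\le P^+(p-1)<x^c.$$
My goal is therefore to bound this count by $O\!\left(x\log\log x/(\log x)^2\right)$. The guiding idea is that the two thresholds $p^c$ and $x^c$ differ appreciably only when $p$ is much smaller than $x$, and there are few such primes; while for $p$ close to $x$ the window $[p^c,x^c)$ is so short that the primes $q=P^+(p-1)$ landing inside it are severely constrained.

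Concretely, I would split the range of $p$ at $x/\log x$. For the small range $p\le x/\log x$ I simply bound the number of admissible $p$ by the total prime count $\pi(x/\log x)\ll x/(\log x)^2$, since $\log(x/\log x)\sim\log x$; this is already within the claimed error term.

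For the large range $x/\log x<p\le x$, I note that $p^c>x^c(\log x)^{-c}$, so the prime $q=P^+(p-1)$ lies in the short interval $x^c(\log x)^{-c}<q\le x^c$. Because $q\mid p-1$, each such $p$ satisfies $p\equiv 1\pmod{q}$ and is counted by $\pi(x;q,1)$; thus the number of these primes is at most
$$\sum_{x^c(\log x)^{-c}<q\le x^c}\pi(x;q,1),$$
the sum running over primes $q$. As $q\le x^c$ with $c<1$ fixed, one has $x/q\ge x^{1-c}$, so the Brun--Titchmarsh inequality gives $\pi(x;q,1)\ll x/(q\log x)$ uniformly in $q$ (the implied constant may depend on $c$). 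It then remains to estimate $\sum_q 1/q$ over the interval, and here Mertens' theorem yields
$$\sum_{x^c(\log x)^{-c}<q\le x^c}\frac1q=\log\log(x^c)-\log\log\!\big(x^c(\log x)^{-c}\big)+o(1)\ll\frac{\log\log x}{\log x}.$$
Combining the last two estimates bounds the large-range contribution by $\ll x\log\log x/(\log x)^2$, and adding the small-range bound completes the argument.

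The main obstacle I anticipate is this large-range estimate: one must apply Brun--Titchmarsh uniformly across the entire band of moduli $q$, which is legitimate precisely because $q\le x^c$ keeps $\log(x/q)\gg\log x$, and then one must extract the crucial saving of size $\log\log x/\log x$ from the Mertens sum over the short interval $(x^c(\log x)^{-c},x^c]$. Everything else reduces to routine prime-counting bounds.
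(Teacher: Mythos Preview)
Your argument is correct. The inclusion $T'_c(x)\le T_c(x)$ is clear, the small-range bound $\pi(x/\log x)\ll x/(\log x)^2$ is immediate, and in the large range the Brun--Titchmarsh inequality applies uniformly since $q\le x^c$ forces $\log(x/q)\ge(1-c)\log x$; the Mertens computation over $(x^c(\log x)^{-c},x^c]$ then gives the required saving of $\log\log x/\log x$.

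Note, however, that the present paper does not supply its own proof of this lemma: it is quoted as \cite[Theorem~2]{Wu}. Your argument is in fact essentially the one Wu gives --- the same dyadic splitting of the range of $p$ near $x/\log x$, the same use of Brun--Titchmarsh for the moduli $q\le x^c$, and the same Mertens-type estimate for $\sum_q 1/q$ over the short multiplicative interval. So your proposal matches the cited source, even though there is nothing in the current paper to compare it against directly.
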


The proof of our theorem, among other things, is based on the following deep result which appeared first in the paper of Banks and Shparlinski \cite[Lemma 2.1]{BS}. Here, we use the version stated by Wu with a slight adjustment from prime modulus $q$ to integer modulus $m$.
\begin{lemma}\label{lem4}\cite[Lemma 2.2]{Wu}
There exist two functions $K_2(\theta)>K_1(\theta)>0$, defined on the interval $(0,17/32)$ such that for each fixed real $A>0$, and all sufficiently large $Q=x^\theta$, the inequalities
$$K_1(\theta)\frac{\pi(x)}{\varphi(m)}\le \pi(x;m,1)\le K_2(\theta)\frac{\pi(x)}{\varphi(m)}$$
hold for all integers $m\in(Q,2Q]$ with at most $O\left(Q(\log Q)^{-A}\right)$ exceptions, where the implied constant depends only on $a,A$ and $\theta$. Moreover, for any fixed $\varepsilon>0$, these functions can be chosen to satisfy the following properties:\\
$\bullet$ $K_1(\theta)$ is monotonic decreasing, and $K_2(\theta)$ is monotonic increasing.\\
$\bullet$ $K_1(1/2)=1-\varepsilon$ and $K_2(1/2)=1+\varepsilon$.
\end{lemma}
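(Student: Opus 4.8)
The plan is to first replace $T_c(x)$ by the companion quantity $T'_c(x)$ of Lemma \ref{lem3}, which swaps the moving threshold $p^c$ for the fixed threshold $x^c$ at the cost of an error $O\big(x\log\log x/(\log x)^2\big)=o(\pi(x))$; it thus suffices to bound $T'_c(x)$, and I will work with a constant $c$ close to $1$, in particular with $c>1/2$. The decisive elementary observation is that when $c\ge 1/2$ every prime $p\le x$ counted by $T'_c(x)$ has a \emph{unique} prime factor $q=P^+(p-1)\ge x^c$ of $p-1$, since two such factors would force $p-1\ge x^{2c}>x$. Hence the count splits with no overcounting,
\[
T'_c(x)=\sum_{\substack{x^c\le q\le x\\ q\ \text{prime}}}\pi(x;q,1),
\]
reducing everything to an upper bound for a sum over large prime moduli.

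The obstruction is that these moduli $q>x^c$ lie far beyond the Bombieri--Vinogradov range, so no pointwise asymptotic for $\pi(x;q,1)$ is available and Lemma \ref{lem4} only reaches $q\le x^{17/32}$. I will avoid estimating the large moduli altogether. Since $\log q>c\log x$ on the range of summation, I bound the unweighted sum by the logarithmically weighted one, losing only a factor $1/c$ that is harmless as $c\to 1$, and then enlarge the range:
\[
T'_c(x)\le \frac{1}{c\log x}\sum_{x^c\le q\le x}(\log q)\,\pi(x;q,1)\le \frac{1}{c\log x}\sum_{x^\alpha<q\le x}(\log q)\,\pi(x;q,1),
\]
where $1/2<\alpha<17/32$ is a fixed parameter and the last step only adds nonnegative terms. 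The weighted sum over the \emph{full} range $(x^{1/2},x]$ is known exactly: combining Lemmas \ref{lem1} and \ref{lem2} gives $L(x;x^{1/2},x)=x/2+O(x\log\log x/\log x)$, and a routine estimate shows the prime-power terms contribute only $o(x)$, so $\sum_{x^{1/2}<q\le x}(\log q)\,\pi(x;q,1)=x/2+o(x)$. It then remains to subtract the contribution of $(x^{1/2},x^\alpha]$, where Lemma \ref{lem4} \emph{does} apply: using only its lower bound $\pi(x;q,1)\ge K_1(\theta)\pi(x)/\varphi(q)$ (with $\theta=\log q/\log x$), the monotonicity $K_1(\theta)\ge K_1(\alpha)$, and the Mertens estimate $\sum_{x^{1/2}<q\le x^\alpha}(\log q)/q=(\alpha-\tfrac12)\log x+O(1)$, I will obtain
\[
\sum_{x^{1/2}<q\le x^\alpha}(\log q)\,\pi(x;q,1)\ge K_1(\alpha)\left(\alpha-\tfrac12\right)x\,(1+o(1)).
\]
The $O\big(Q(\log Q)^{-A}\big)$ exceptional moduli in each dyadic block are simply discarded (they can only help a lower bound), and with $A$ large their total weight is $o(\log x)$, hence negligible.

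Combining these gives $\sum_{x^\alpha<q\le x}(\log q)\,\pi(x;q,1)\le \big(\tfrac12-K_1(\alpha)(\alpha-\tfrac12)\big)x+o(x)$ and therefore
\[
\limsup_{x\to\infty}\frac{T'_c(x)}{\pi(x)}\le \frac{1}{c}\left(\frac12-K_1(\alpha)\Big(\alpha-\frac12\Big)\right).
\]
Fixing any $\alpha\in(1/2,17/32)$ yields $\delta:=K_1(\alpha)(\alpha-1/2)>0$, since $K_1>0$ on this interval; then any constant $c$ with $1-2\delta<c<1$ (which forces $c>\alpha$, as $\delta<1/32$) makes the right-hand side strictly below $1/2$, and by Lemma \ref{lem3} the same bound holds for $T_c(x)$, proving Theorem \ref{thm1}. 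The heart of the argument, and the step I expect to be most delicate, is precisely this trade: a forbidden pointwise estimate over $(x^c,x]$ is replaced by an \emph{exact} weighted total over $(x^{1/2},x]$ (Goldfeld's identities) minus a one-sided Lemma \ref{lem4} bound over $(x^{1/2},x^\alpha]$. The factor $1/c$ lost in the weighting is recovered only because $c$ may be taken arbitrarily close to $1$, so the margin below $1/2$ is genuinely small and every error term must be controlled to $o(\pi(x))$.
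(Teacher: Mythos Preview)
Your proposal does not address the stated lemma at all. Lemma~\ref{lem4} is a quoted result (from Wu, ultimately resting on Bombieri--Vinogradov for $\theta<1/2$, Baker--Harman for $1/2\le\theta\le 13/25$, and Mikawa for $13/25<\theta<17/32$), and the paper offers no proof of it beyond that attribution. What you have written is instead a proof of Theorem~\ref{thm1}, \emph{using} Lemma~\ref{lem4} as a black box.

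Judged as a proof of Theorem~\ref{thm1}, your argument is correct and follows essentially the same route as the paper: pass from $T_c$ to $T'_c$ via Lemma~\ref{lem3}; exploit the uniqueness of the large prime factor when $c>1/2$ to write $T'_c(x)=\sum_{x^c\le q\le x}\pi(x;q,1)$; insert the weight $\log q\ge c\log x$ at the cost of a factor $1/c$; evaluate the full weighted sum over $(x^{1/2},x]$ as $x/2+o(x)$ from Lemmas~\ref{lem1} and~\ref{lem2}; and subtract a positive contribution from $(x^{1/2},x^{\alpha}]$ obtained from the lower bound in Lemma~\ref{lem4} over dyadic blocks, discarding the exceptional moduli. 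The paper makes the specific choice $\alpha=33/64$ and carries out the dyadic estimate explicitly (obtaining $\sum_{Q_j<m\le 2Q_j,\,m\notin S_j}\Lambda(m)/\varphi(m)\ge 1/5$), while you leave $\alpha\in(1/2,17/32)$ generic and summarize the same calculation via Mertens; these are cosmetic differences, not substantive ones.
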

{\it Remark. The above lemma is due to the Bombieri--Vinogradov theorem for $0<\theta<1/2$, Baker--Harman \cite{BH} for $1/2\le\theta\le 13/25$ and Mikawa \cite{Mi} for $13/25\le \theta\le 17/32$.}

Now we turn to the proof of Theorem \ref{thm1}.
\begin{proof}[Proof of Theorem \ref{thm1}] Let $33/64<c<1$ be a fixed constant. Throughout our proof, $p$ and $q$ will always denote primes unless indicated otherwise. The lower bound on $T_c(x)$, as observed by Goldfeld, starts from
$$T_c(x)\ge\sum_{p\le x}\sum_{\substack{x^c\le q\le x\\ q|p-1}}1\ge \frac{1}{\log x}\sum_{p\le x}\sum_{\substack{x^c\le q\le x\\ q|p-1}}\log q.$$
For an upper bound of $T_c(x)$, we begin with the dual observation inspired by Goldfeld with the help of Lemma \ref{lem3}
\begin{align*}
T_c(x)&=\sum_{p\le x}\sum_{\substack{x^c\le q\le x\\q|p-1}}1+O\left(\frac{x\log\log x}{(\log x)^2}\right)
&\le \frac{\log x}{c}\sum_{p\le x}\sum_{\substack{x^c\le q\le x\\q|p-1}}\log q+O\left(\frac{x\log\log x}{(\log x)^2}\right).
\end{align*}
Changing the order of summations above, we get
\begin{align}\label{eq2-1}
T_c(x)\le \frac{\log x}{c}\sum_{x^c\le q\le x}\pi(x;q,1)\log q+O\left(\frac{x\log\log x}{(\log x)^2}\right).
\end{align}
Following Goldfeld, the above estimate can be handled firstly by manipulating the weighted sum $L(x;x^c,x)$. For $33/64<c<1$, from Lemmas \ref{lem1} and \ref{lem2} we clearly have
\begin{align}\label{eq2-2}
L(x;x^c,x)&=L(x;1,x)-L(x;1,x^{1/2})-L(x;x^{1/2},x^c)+O(\log x)\nonumber\\
&=x/2-L(x;x^{1/2},x^c)+O\left(\frac{x\log\log x}{\log x}\right)\nonumber\\
&\le x/2-L(x;x^{1/2},x^{33/64})+O\left(\frac{x\log\log x}{\log x}\right).
\end{align}
We now employ Lemma \ref{lem4} to give a nontrivial lower bound of $L(x;x^{1/2},x^{33/64})$. For integers $1\le j\le \left\lfloor \frac{\log x}{65\log 2}\right\rfloor$, let $Q_j=2^jx^{1/2}$. Then we have $Q_j>x^{1/2}$ and $2Q_j<x^{33/64}$ for sufficiently large $x$. For any $1\le j\le \left\lfloor \frac{\log x}{65\log 2}\right\rfloor$, from Lemma \ref{lem4} (with $A=2$) we have
\begin{align}\label{eq2-3}
\pi(x;m,1)>\frac{K_1(33/64)}{2}\frac{x}{\varphi(m)\log x}
\end{align}
for all integers $m\in(Q_j,2Q_j]$ with at most $O\left(Q_j/(\log x)^2\right)$ exceptions. For $1\le j\le \left\lfloor \frac{\log x}{65\log 2}\right\rfloor$, let $S_j$ be the set of exceptions of $m$ in the interval $(Q_j,2Q_j]$. Thus, in view of equation (\ref{eq2-3}), we have
\begin{align}\label{eq2-4}
L(x;x^{1/2},x^{33/64})&=\sum_{x^{1/2}< m\le x^{33/64}}\Lambda(m)\pi(x;m,1)\nonumber\\
&\ge \sum_{j=1}^{\left\lfloor \frac{\log x}{65\log 2}\right\rfloor}\sum_{\substack{Q_j< m\le 2Q_j\\ m\not\in S_j}}\Lambda(m)\pi(x;m,1)\nonumber\\
&\ge \frac{K_1(33/64)}{2}\frac{x}{\log x}\sum_{j=1}^{\left\lfloor \frac{\log x}{65\log 2}\right\rfloor}\sum_{\substack{Q_j< m\le 2Q_j\\ m\not\in S_j}}\frac{\Lambda(m)}{\varphi(m)}.
\end{align}
For $1\le j\le \left\lfloor \frac{\log x}{65\log 2}\right\rfloor$, we clearly have
$$\sum_{\substack{Q_j< m\le 2Q_j}}\frac{\Lambda(m)}{\varphi(m)}\ge \sum_{\substack{Q_j< p\le 2Q_j}}\frac{\log p}{p}\ge \log Q_j\sum_{\substack{Q_j< p\le 2Q_j}}\frac{1}{p}.$$
Chebyshev's estimate or the prime number theorem gives us
$$\sum_{\substack{Q_j< p\le 2Q_j}}\frac{1}{p}\ge \frac{1}{2Q_j}\sum_{\substack{Q_j< p\le 2Q_j}}1\ge \frac{1}{2Q_j}\frac{Q_j}{2\log Q_j}=\frac{1}{4\log Q_j},$$
from which we deduce that
$$\sum_{\substack{Q_j< m\le 2Q_j}}\frac{\Lambda(m)}{\varphi(m)}\ge \frac{1}{4}.$$
Moreover, we have
$$\sum_{\substack{m\in S_j}}\frac{\Lambda(m)}{\varphi(m)}\ll \log (2Q_j)\sum_{\substack{m\in S_j}}\frac{\log\log m}{m}\ll \log x\log\log x\sum_{\substack{m\in S_j}}\frac{1}{m}$$
since $\varphi(m)\gg \frac{m}{\log\log m}$ (see for example \cite[Theorem 2.9]{MV}), where the implied constants are all absolute. Noticing that $S_j\subset(Q_j,2Q_j]$ and
$|S_j|\ll Q_j/(\log x)^2$ by its definition, we get
$$\sum_{\substack{m\in S_j}}\frac{1}{m}\ll \frac{1}{Q_j}\frac{Q_j}{(\log x)^2}=1/(\log x)^2.$$
It follows that $$\sum_{\substack{m\in S_j}}\frac{\Lambda(m)}{\varphi(m)}\ll \frac{\log\log x}{\log x},$$
where the implied constant is absolute. Thus, we have
\begin{equation}\label{inequality}
\sum_{\substack{Q_j< m\le 2Q_j\\ m\not\in S_j}}\frac{\Lambda(m)}{\varphi(m)}\ge 1/5
\end{equation}
provided that $x$ is sufficiently large.
Combining equation (\ref{eq2-2}), inequalities (\ref{eq2-4}) and (\ref{inequality}) we conclude that there exists some $\delta>0$ such that
\begin{align}\label{eq2-5}
L(x;x^c,x)\le (1/2-\delta)x+O\left(\frac{x\log\log x}{\log x}\right).
\end{align}
The transition from $L(x;x^c,x)$ to $\sum_{x^c\le q\le x}\pi(x;q,1)\log q$ is somewhat standard. It is clear that
\begin{align}\label{eq2-6}
L(x;x^c,x)-\sum_{x^c\le q\le x}\pi(x;q,1)\log q=\sum_{\substack{x^c\le q^k\le x\\k>1}}\pi(x;q^k,1)\log q+O(\log x).
\end{align}
For simplicity, we now require that $3/4<c<1$. Trivial estimates lead to
\begin{align}\label{eq2-7}
\sum_{\substack{x^c\le q^k\le x\\k>1}}\pi(x;q^k,1)\log q&\le\sum_{\substack{x^{3/4}\le q^k\le x\\q\le x^{1/2}\\k>1}}\pi(x;q^k,1)\log q
\ll x^{1/4}\sum_{\substack{x^{3/4}\le q^k\le x\\q\le x^{1/2}\\k>1}}\log q\nonumber\\
&\ll x^{1/4}\sum_{q\le x^{1/2}}\log q\log x\ll x^{3/4}(\log x)^2.
\end{align}
From equations (\ref{eq2-5}), (\ref{eq2-6}) and (\ref{eq2-7}), we obtain
\begin{align}\label{eq2-8}
\sum_{x^c\le q\le x}\pi(x;q,1)\log q\le (1/2-\delta)x+O\left(\frac{x\log\log x}{\log x}\right)
\end{align}
provided that $3/4<c<1$. Hence, equation (\ref{eq2-8}) together with equation (\ref{eq2-1}) yield
$$T_c(x)\le \frac1c(1/2-\delta)x/\log x+O\left(\frac{x\log\log x}{(\log x)^2}\right).$$
Now the theorem follows from taking $c=\max\{1-\delta,4/5\}$.
\end{proof}

\section{Final remarks}
As early as 1980, Pomerance \cite{Po} conjectured that for any $c\in(0,1)$ we have
$$\sum_{\substack{p\le x\\ P^+(p-1)\le x^c}}1\sim \rho(1/c)\pi(x), \quad \text{as~}x\rightarrow\infty.$$
This is surely a rather difficult conjecture which is beyond the power of present mathematics. Granville \cite{Gran} claimed that it follows from the Eillott--Halberstam conjecture. Recently, Wang \cite{Wang} offered an alternative proof of this claim. Therefore, assuming the Eillott--Halberstam conjecture, one can prove $$\lim_{x\rightarrow\infty}T_c(x)/\pi(x)\rightarrow0, \quad \text{as~}c\rightarrow1$$
by Lemma \ref{lem3}. This immediately leads to a negative answer of Conjecture \ref{conje1} for $k=1$ under the Eillott--Halberstam conjecture. So this note can be viewed as an unconditional proof of this matter. The exceptional result for level greater than $1/2$ (see Lemma \ref{lem4}) is crucial for this goal.

\section*{Acknowledgments}
The author would like to thank the anonymous referee for his/her helpful comments which improved the quality of this note greatly. 

The author is supported by National Natural Science Foundation of China under Grant No. 12201544, Natural Science Foundation of Jiangsu Province, China, Grant No. BK20210784, China Postdoctoral Science Foundation, Grant No. 2022M710121, the foundations of the projects "Jiangsu Provincial Double--Innovation Doctor Program'', Grant No. JSSCBS20211023 and "Golden  Phoenix of the Green City--Yang Zhou'' to excellent PhD, Grant No. YZLYJF2020PHD051.

\end{document}